\documentclass[11pt]{article}

\usepackage{fullpage}
\usepackage{amsmath, amsthm, amsfonts, amssymb, amstext, mathrsfs, enumerate}
\usepackage{graphicx, ragged2e, lscape, framed, xcolor}
\usepackage{subfiles}

\theoremstyle{plain}
\newtheorem{theorem}{Theorem}[section]

\newtheorem{conjecture}[theorem]{Conjecture}

\newtheorem{claim}{Claim}[section]

\numberwithin{equation}{section}
\allowdisplaybreaks

\newcommand{\affl}[3]{\noindent #1, Email: {\tt #2}\\ \textsc{#3}\\[1.5pt]}

\usepackage[pagebackref]{hyperref}
\hypersetup{
	colorlinks=true,
    urlcolor=purple,
	linkcolor=purple,
    citecolor=purple,
}

\DeclareMathOperator{\dist}{dist}
\DeclareMathOperator{\diag}{diag}
\DeclareMathOperator{\supp}{supp}
\def\p{\mbox{\boldmath $p$}}
\def\q{\mbox{\boldmath $q$}}
\def\x{\mbox{\boldmath $x$}}
\def\y{\mbox{\boldmath $y$}}

\def\0{\mbox{\boldmath $0$}}
\def\1{\mbox{\boldmath $1$}}

\title{\textbf{Spectral gap of the normalized distance Laplacian}}
\author{Hitesh Kumar \and Kamal Lochan Patra}
\date{}

\begin{document}
\maketitle
\begin{abstract}
The smallest positive eigenvalue $\partial_2$ of the normalized distance Laplacian matrix $\mathcal{D}^{\mathcal{L}}$ of a connected graph is called its \emph{spectral gap} and is intimately related to the Cheeger constant of $\mathcal{D}^{\mathcal{L}}$. Byrne, Johnston, Schildkraut and Tait (2025) conjectured that 
 \[ \partial_2 \ge \frac{2}{3}\]
for all connected graphs. We prove the following stronger result: for any connected graph $G$ of order at least 2,
\[\partial_2 \ge \frac{2}{3} + \frac{4}{3\,t_{\max}},\]
where $t_{\max}$ denotes the maximum transmission in $G$. Moreover, equality holds if and only if $G\cong K_{m,m}$ for some $m\ge 1$.  
\end{abstract}

\noindent
\textbf{Keywords:} Normalized distance Laplacian; Cheeger constant; Spectral gap

\noindent
\textbf{MSC2020:} 05C50; 05C12; 05C35 

\section{Introduction}

\subsection{Notation and terminology}

Let $G = (V(G), E(G))$ be a finite simple \emph{connected} graph of order $n = |V(G)|\ge 2$. For a pair of vertices $u, v\in V(G)$, let $\dist(u,v)$ denote the distance between $u$ and $v$ in $G$. Note that $\dist(u,u)=0$ and $\dist(u,v)=1$ if $u\sim v$ in $G$. The \emph{distance matrix} of $G$, denoted $\mathcal{D}(G)$, is the $n\times n$ matrix whose entries are given by 
\[ \mathcal{D}(G)_{u,v}:= \dist(u,v).\]
The \emph{transmission} of a vertex $u\in V(G)$ is defined to be 
\[ t(u):=\sum_{v\in V(G)}\dist(u,v).\]
In words, $t(u)$ is the row-sum of $D(G)$ corresponding to the vertex $u$. Since $G$ is connected, $t(u)>0$ for all $u\in V(G)$. The \emph{transmission matrix} of $G$ is the diagonal matrix
\[ T(G):=\diag(t(u))_{u\in V(G)}.\]
We denote the \emph{maximum transmission} in $G$ by
\[t_{\max}(G): = \max_{u\in V(G)} t(u).\]
The \emph{normalized distance Laplacian matrix} of $G$ is the matrix
\[ \mathcal{D}^{\mathcal{L}}(G):= T(G)^{-1/2}\left(T(G)- \mathcal{D}(G)\right)T(G)^{-1/2}.\]
Since this is a real symmetric matrix, the eigenvalues are real, which we enumerate as
\[ \partial_1(G)\le \partial_2(G)\le \ldots\le \partial_n(G).\]
It is easy to see that the quadratic form
\begin{equation}\label{eq:quadratic_form}
  \x^T (T(G)-\mathcal{D}(G))  \x  = \frac{1}{2} \sum_{u,v\in V(G)}\dist(u,v)(x_u-x_v)^2\ge 0  
\end{equation}
for any $\x\in \mathbb{R}^{V(G)}$, where the summation is over ordered pair of vertices. Moreover, equality holds in \eqref{eq:quadratic_form} if and only if $\x$ is a constant vector. Thus, $\partial_1(G) = 0$ and $\partial_2(G) > 0$. The eigenvalue $\partial_2(G)$ is called the \emph{spectral gap} of $\mathcal{D}^{\mathcal{L}}(G)$. 

\subsection{Motivation and main results}

Distance matrices of connected graphs are well-studied in the literature. We refer the reader to the seminal work on distance matrices of Graham--Pollak \cite{Graham_Pollak_1971} and Graham--Lov\'{a}sz \cite{Graham_Lovasz_1978}. Also refer to the survey on distance spectra by Aouchiche--Hansen \cite{Aouchiche_Hansen_2014}.

The classical Laplacian and the signless Laplacian matrices (see \cite{Chung_1997}) were generalized to the distance setting by Aouchiche--Hansen \cite{Aouchiche_Hansen_2013} using the transmission matrix. We refer to the recent surveys on distance Laplacian \cite{Rather_Aouchiche_2025} and distance signless Laplacian \cite{Rather_Ganie_Wang_2026}.  

Motivated by the extensive work on normalized Laplacian matrices (see \cite{Chung_1997}), Reinhart \cite{Reinhart_2021} (cf. \cite{Hogben_Reinhart_2022}) investigated the spectra of the normalized distance Laplacian matrix $\mathcal{D}^{\mathcal{L}}$ and contrasted its behaviour with the spectra of the normalized Laplacian. See \cite{Ganie_Rather_Das_2023, Johnston_Tait_2024} for recent work on $\mathcal{D}^{\mathcal{L}}$. 

Byrne--Johnston--Schildkraut--Tait \cite{Byrne_Johnston_Schildkraut_Tait_2025} investigated \emph{expansion} properties of $\mathcal{D}^{\mathcal{L}}$. In particular, they obtained sharp lower bounds for the Cheeger constant $h_G$ of $\mathcal{D}^{\mathcal{L}}(G)$ and contrasted its behaviour with the classical Cheeger constant. A consequence of their result is that 
\begin{equation}
   h_G \ge \frac{1}{3}, 
\end{equation}
which is asymptotically sharp. By Cheeger's inequality (cf. \cite{Chung_1997}), it is known that 
\begin{equation}\label{eq:cheeger}
    \frac{h_G^2}{2}\le \partial_2(G)\le 2h_G.
\end{equation}
Thus, we immediately see that 
\[ \partial_2(G)\ge \frac{1}{18}\]
for any connected graph $G$. Byrne et al. \cite{Byrne_Johnston_Schildkraut_Tait_2025} improved this lower bound further. 

\begin{theorem}[Byrne--Johnston--Schildkraut--Tait \cite{Byrne_Johnston_Schildkraut_Tait_2025}] For any connected graph $G$, 
\[ \partial_2(G)\ge \frac{9-4\sqrt{2}}{7}\approx 0.478.\] 
\end{theorem}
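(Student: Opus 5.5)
We will in fact aim to prove the stronger bound $\partial_2(G)\ge \tfrac{2}{3}$, which implies the displayed estimate since $\tfrac{9-4\sqrt 2}{7}<\tfrac23$. The starting point is the variational characterization. Substituting $\x = T^{1/2}\y$, we get
\[
\partial_2(G)=\min\left\{\frac{\tfrac12\sum_{u,v}\dist(u,v)(y_u-y_v)^2}{\sum_u t(u)y_u^2}\;:\; \y\neq \0,\ \sum_u t(u)y_u=0\right\}.
\]
Since $\y^T(T-\mathcal D)\y=\sum_u t(u)y_u^2-\y^T\mathcal D\y$, this says $\partial_2=1-\max \y^T\mathcal D\y/\y^TT\y$ over the same set. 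So the whole task is to show that, for every $\y$ with $\sum_u t(u)y_u=0$,
\[
\y^T\mathcal D\y\le \tfrac13\,\y^T T\y .
\]

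To prove this inequality I will exploit the constraint through the distance matrix itself. The constraint reads $\sum_{u,w}\dist(u,w)y_u=0$, which means $\1^T\mathcal D\y=0$. The plan is to write $\y^T\mathcal D\y=\sum_{u,v}\dist(u,v)y_uy_v$ and then bound each $\dist(u,v)$ using the triangle inequality through an auxiliary vertex $w$, namely $\dist(u,v)\le \dist(u,w)+\dist(w,v)$. Averaging over $w$ is not directly useful. Instead, I will use a symmetrization: split $y=y^+-y^-$ and pair positive with negative mass. Pairs $(u,v)$ with $y_uy_v<0$ contribute negatively, so only same-sign pairs matter. For those pairs, I will use $2y_uy_v\le y_u^2+y_v^2$ weighted by $\dist(u,v)$. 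I will also use the constraint to show that the same-sign contribution is controlled by one third of $\sum_u t(u)y_u^2$.

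Concretely, I expect to need a weighted inequality of the form
\[
\sum_{v:\,y_uy_v>0}\dist(u,v)|y_v|\le \tfrac13\, t(u)|y_u|+(\text{terms that telescope via }\1^T\mathcal D\y=0),
\]
summed over $u$. The factor $\tfrac13$ should come from the triangle inequality applied in the form $\dist(u,v)\le \dist(u,w)+\dist(v,w)$, with $w$ ranging over the opposite-sign support.

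The main obstacle is exactly this middle step: converting the global linear constraint $\1^T\mathcal D\y=0$ into a quadratic bound with constant $\tfrac13$. I would first test the inequality on $K_{m,m}$ with $\y$ equal to $+1$ on one part and $-1$ on the other, and on paths. For $K_{m,m}$ one gets $\partial_2=\tfrac23+\tfrac{4}{3(3m-2)}$, which shows that $\tfrac13$ is the right asymptotic constant. I would then calibrate the triangle-inequality averaging so that it is tight on these examples. If the direct route stalls, a fallback that still suffices for the stated bound is to combine a crude version of the inequality, with constant $\tfrac{2(\sqrt2-1)}{\ldots}$-type loss, with the Cheeger-type lower bound $h_G\ge\tfrac13$ via an optimized interpolation. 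The specific form $\tfrac{9-4\sqrt2}{7}$ suggests that such a two-parameter optimization is how the cited bound arises.
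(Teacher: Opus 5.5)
Your reduction is fine: since $\frac{9-4\sqrt2}{7}<\frac23$, it suffices to prove $\partial_2\ge\frac23$. This is exactly how the statement follows in the paper, from $\partial_2\ge\frac23+\frac{4}{3t_{\max}}$. Your variational formula with the constraint $\1^\top T\y=\1^\top\mathcal{D}\y=0$ is also exact. But the proposal stops where the proof has to happen. The inequality $\y^\top\mathcal{D}\y\le\frac13\,\y^\top T\y$ is never proved, and the ``telescoping terms'' are never identified. The fallback is not an argument either: Cheeger's inequality with $h_G\ge\frac13$ gives only $\frac1{18}$, and no crude inequality or interpolation is specified.

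Worse, the one reduction you do commit to fails: you cannot discard the opposite-sign pairs. Take $K_{m,m}$ with $\y=\pm1$ on the two parts, which satisfies your constraint. The same-sign part is $\p^\top\mathcal{D}\p+\q^\top\mathcal{D}\q=4m(m-1)$, which exceeds $\frac13\y^\top T\y=\frac{2m(3m-2)}{3}$ for every $m\ge2$. The target inequality holds only because the cross term $-2\p^\top\mathcal{D}\q=-2m^2$ is kept. Since $2y_uy_v\le y_u^2+y_v^2$ only bounds this same-sign sum from above, that route cannot reach $\frac13$ either.

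Two ideas are missing.

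First, do not work with the exact constraint. Courant--Fischer gives $\partial_2=\max_{\dim U=n-1}\min_{\0\ne\x\in U}\x^\top\mathcal{D}^{\mathcal{L}}\x/\x^\top\x$, so any $(n-1)$-dimensional test subspace yields a lower bound. The paper takes $U=T^{1/2}\1^\perp$, i.e.\ it imposes $\1^\top\y=0$ instead of $\1^\top T\y=0$. This is what gives the positive and negative parts equal mass $s=\1^\top\p=\1^\top\q$. Under your constraint you only get $\1^\top T\p=\1^\top T\q$, and weighting the triangle inequality by that produces mismatched forms like $\p^\top\mathcal{D}T\q$.

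Second, write
\[
\y^\top T\y-3\y^\top\mathcal{D}\y=|\y|^\top(T-\mathcal{D})|\y|+2(2\p^\top\mathcal{D}\q-\p^\top\mathcal{D}\p)+2(2\p^\top\mathcal{D}\q-\q^\top\mathcal{D}\q),
\]
where the first term is nonnegative. Equal masses give
\[
s(2\p^\top\mathcal{D}\q-\p^\top\mathcal{D}\p)=\sum_{u,w\in\supp(\p),\,v\in\supp(\q)}p_up_wq_v\bigl(\dist(u,v)+\dist(v,w)-\dist(u,w)\bigr).
\]
So your intended triangle inequality through opposite-sign vertices, now paid for by the cross terms, applies. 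Together with $\dist(u,v)\ge1$ on the diagonal terms $u=w$, it gives $2\p^\top\mathcal{D}\q-\p^\top\mathcal{D}\p\ge2\|\p\|_2^2$, and symmetrically for $\q$. Hence $\y^\top T\y-3\y^\top\mathcal{D}\y\ge4\|\y\|_2^2$. Combined with $\y^\top T\y\le t_{\max}\|\y\|_2^2$, this yields $\partial_2\ge\frac23+\frac{4}{3t_{\max}}$.
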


Furthermore, they proposed the following conjecture. 

\begin{conjecture}[Byrne--Johnston--Schildkraut--Tait \cite{Byrne_Johnston_Schildkraut_Tait_2025}] 
\label{conj:spectral_gap} For any connected graph $G$, 
\[ \partial_2(G)\ge \frac{2}{3}.\]
\end{conjecture}

As some evidence, Byrne et al. \cite{Byrne_Johnston_Schildkraut_Tait_2025} proved Conjecture \ref{conj:spectral_gap} for Cayley graphs on abelian groups.

Our main result is the following sharp lower bound for $\partial_2$, which resolves Conjecture \ref{conj:spectral_gap}.

\begin{theorem}\label{thm:spectral_gap_main}
For any connected graph $G$ of order $n\ge 2$, we have 
    \[\partial_2(G)\ge \frac{2}{3} + \frac{4}{3t_{\max}(G)}.\]
Moreover, equality holds if and only if $G\cong K_{m,m}$ for some $m\ge 1$.     
\end{theorem}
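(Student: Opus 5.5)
The plan is to work with the Rayleigh quotient of $\mathcal{D}^{\mathcal{L}}(G)$ after the substitution $\x = T^{1/2}\y$. Since $T^{1/2}\1$ spans the kernel, we have
\[
\partial_2(G)=\min\Big\{ \frac{\y^T(T-\mathcal{D})\y}{\y^T T\y} \;:\; \y\neq \0,\ \sum_{u} t(u)y_u=0\Big\}.
\]
It therefore suffices to show that for every $\y\neq\0$ with $\sum_u t(u)y_u=0$,
\[
\y^T\mathcal{D}\y \le \frac13\,\y^T T\y-\frac{4}{3\,t_{\max}}\,\y^T T\y .
\]
Because $\y^T T\y\le t_{\max}\,\y^T\y$, this reduces to the key inequality
\begin{equation*}
3\,\y^T\mathcal{D}\y+4\,\y^T\y-\y^T T\y\le 0 \qquad\text{whenever } \textstyle\sum_u t(u)y_u=0. \tag{$\ast$}
\end{equation*}
In other words, $T-3\mathcal{D}-4I$ should be positive semidefinite on $(T\1)^\perp$. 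A quick check on $K_{m,m}$ (where $t\equiv 3m-2$, distances are $1$ across the parts and $2$ within a part, and the test vector is $\pm1$ on the two sides) shows that $(\ast)$ is tight there. This suggests that the constants are right and that $(\ast)$ is the statement to aim for.

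To prove $(\ast)$, I would exploit the metric structure through the triangle inequality in the form of nonnegative Gromov products $g_w(u,v)=\dist(u,w)+\dist(w,v)-\dist(u,v)\ge 0$, together with the facts that $\dist(u,v)\ge 1$ for $u\ne v$ and $\dist(u,v)\ge 2$ for non-adjacent pairs. Concretely, split $\mathcal{D}=A+2(J-I-A)+R$, where $A$ is the adjacency matrix and $R\ge 0$ entrywise collects the excess distances beyond $2$. The constraint $\sum_u t(u)y_u=\sum_{u,w}\dist(u,w)y_u=0$ lets me add, for free, multiples of expressions such as $\big(\sum_w\sum_u \dist(u,w)y_u\big)\big(\sum_v y_v\big)$. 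Summing $g_w(u,v)\,\cdot(\text{suitable quadratic terms in } y_u,y_v)$ over all triples $(u,v,w)$ should then produce $\y^T T\y-3\y^T\mathcal{D}\y-4\y^T\y$ as a sum of squares plus nonnegative multiples of $g_w$. I would first try the weights $g_w(u,v)(y_u-y_v)^2$ and $\dist(u,w)y_u^2$, and match coefficients against the $K_{m,m}$ extremal vector to fix the combination.

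This sum-of-squares certificate for $(\ast)$ is where I expect the real difficulty. The constraint is weighted by $t$ rather than being $\1^T\y=0$, so the natural triple-sum identities do not close up directly. If a uniform certificate fails, my fallback is to treat the diameter-$2$ part exactly, where the form is $\y^T(2J-2I-A)\y$ and the constraint yields $\1^T\y$ in terms of $\sum_u \deg(u)y_u$. I would then show that the pairs at distance $\ge3$ only help: each extra unit of distance adds $(y_u-y_v)^2$ to the numerator and $y_u^2+y_v^2$ to the denominator, which is favourable once the quotient is already below $2$.

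For the equality case, tightness forces equality in $\y^T T\y\le t_{\max}\y^T\y$ on the support of the minimizer, and equality in every inequality used in the certificate. The latter should force diameter at most $2$, the minimizer to take two opposite values on a bipartition, and every cross pair to be adjacent while no pair within a part is adjacent. This pins down a complete bipartite graph $K_{a,b}$, and constant transmission on the support then forces $a=b$. Conversely, a direct computation of the spectrum of $\mathcal{D}^{\mathcal{L}}(K_{m,m})$ gives $\partial_2=\frac{2}{3}+\frac{4}{3(3m-2)}$. The case $m=1$, i.e. $K_2$, is consistent, since there $\partial_2=2=\frac23+\frac43$.
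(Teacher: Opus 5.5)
Your reduction is sound, but there is a genuine gap: the key inequality $(\ast)$ is never proved. The Rayleigh-quotient description of $\partial_2$ over $\sum_u t(u)y_u=0$ is correct, $(\ast)$ (that is, $T-3\mathcal{D}-4I\succeq 0$ on $(T\1)^\perp$) does imply the bound, and it is tight for $K_{m,m}$. But $(\ast)$ is the entire content of the theorem. The sum-of-squares certificate is only hoped for (``should then produce'', ``I would first try''), and you yourself note that with the $t$-weighted constraint the triple sums do not close.

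The fallback does not repair this. Raising $\dist(u,v)$ by one adds $(y_u-y_v)^2$ to the numerator and $y_u^2+y_v^2$ to the denominator. That raises the quotient only if $(y_u-y_v)^2/(y_u^2+y_v^2)$ exceeds the current quotient, which fails, e.g., when $y_u=y_v$; ``below $2$'' is not the right condition. It also changes $T$, hence both the constraint $\sum_u t(u)y_u=0$ and $t_{\max}$, so you are not comparing like with like. The diameter-$2$ case itself is left unargued. Consequently your equality analysis, which appeals to ``equality in every inequality used in the certificate'', has nothing to stand on.

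The missing idea is to get rid of the weighted constraint. In the max--min form of Courant--Fischer, $\partial_2\ge\min_{\x\in U}\x^\top\mathcal{D}^{\mathcal{L}}\x/\x^\top\x$ holds for \emph{every} $(n-1)$-dimensional $U$. So you may take $U=T^{1/2}\1^\perp$, i.e.\ impose $\1^\top\y=0$ instead. Alternatively, stay in your framework: write $\y=\mathbf{z}+\alpha\1$ with $\1^\top\mathbf{z}=0$, and with $M=T-3\mathcal{D}-4I$ one checks $\y^\top M\y=\mathbf{z}^\top M\mathbf{z}+2\alpha^2(W-2n)$, where $W=\sum_u t(u)\ge n(n-1)$. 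Hence $(\ast)$ reduces to the unweighted case for $n\ge 3$; when $n=2$, $\alpha=0$ automatically.

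With $\1^\top\y=0$, split $\y=\p-\q$ into positive and negative parts, which now have equal mass $s=\1^\top\p=\1^\top\q>0$. Then
$\y^\top T\y-3\y^\top\mathcal{D}\y=|\y|^\top(T-\mathcal{D})|\y|+2(2\p^\top\mathcal{D}\q-\p^\top\mathcal{D}\p)+2(2\p^\top\mathcal{D}\q-\q^\top\mathcal{D}\q)$.
The equal masses are exactly what let the triple sum close:
$s(2\p^\top\mathcal{D}\q-\p^\top\mathcal{D}\p)=\sum_{u,w,v}p_up_wq_v\bigl(\dist(u,v)+\dist(v,w)-\dist(u,w)\bigr)$.
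The terms with $u\neq w$ are nonnegative by the triangle inequality. The diagonal terms give $2\sum p_u^2q_v\dist(u,v)\ge 2s\|\p\|_2^2$, and symmetrically for $\q$. This proves $(\ast)$ on $\1^\perp$.

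Tracking equality gives $|\y|$ constant, all cross distances $1$, and all same-side distances $2$. That is $K_{m,m}$ with $m=|\supp^+(\y)|=|\supp^-(\y)|$, directly.
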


It is clear from the above theorem that the lower bound $\frac{2}{3}$ is never achieved by any finite graph. But $\frac{2}{3}$ is an asymptotically tight lower bound. Indeed, 
\[ \partial_2(K_{m,m}) = \frac{2}{3} + \frac{4}{3(3m-2)} \rightarrow \frac{2}{3} \]
as $m\rightarrow \infty$. We prove Theorem \ref{thm:spectral_gap_main} in Section \ref{sec:Proof}.

\subsection{Further discussion}

Our result can be easily extended to the more general setup of finite metric spaces. Let $(X,d)$ be a finite metric space with metric $d$, and define
\[ \delta(X,d) := \min_{\substack{u,v \, \in X\\u\neq v}} d(u,v)\quad \text{and}\quad t_{\max}(X,d) := \max_{u\in X} \, \sum_{v\in X}d(u,v).\]
Then, 
\[ \partial_2(X,d)\ge \frac{2}{3} + \frac{4\delta(X,d)}{3t_{\max}(X,d)},\]
where $\partial_2(X,d)$ denotes the least positive eigenvalue of the normalized distance Laplacian $\mathcal{D}^{\mathcal{L}}(X,d)$ defined in the natural way (cf. \cite{Byrne_Johnston_Schildkraut_Tait_2025}). 

Johnston--Tait \cite{Johnston_Tait_2024} asked how small $\partial_2$ can be over all graphs of order $n$. This is analogous to a problem of Aldous--Fill \cite{Aldous_Fill_2002} about the spectral gap of the normalized Laplacian, which was solved by Aksoy--Chung--Tait--Tobin \cite{Aksoy_Chung_Tait_Tobin_2018}. We refine the question of Johnston--Tait and propose the following conjecture for interested readers. 

\begin{conjecture} Let $G$ be a $\partial_2$-minimizer among all connected graphs of given order $n$. 
\begin{enumerate}[$(i)$]
    \item If $n = 2m$, then $G\cong K_{m,m}$.
    \item If $n=2m+1$, then $G\cong K_{m,m+1}$. 
\end{enumerate}
\end{conjecture}

As for the $\partial_2$-maximization, Reinhart \cite{Reinhart_2021} proved that for a connected graph of order $n$, 
\begin{equation}
    \partial_2(G)\le \frac{n}{n-1}\quad \text{and} \quad \partial_n(G)\ge \frac{n}{n-1}.
\end{equation}

Ganie--Rather--Das \cite{Ganie_Rather_Das_2023} and Johnston--Tait \cite{Johnston_Tait_2024} established that 
\begin{equation}
   \partial_2(G) = \frac{n}{n-1} \iff \partial_n(G) = \frac{n}{n-1} \iff G\cong K_n. 
\end{equation}

\section{Proof of Theorem \ref{thm:spectral_gap_main}}
\label{sec:Proof}

Let $G$ be a connected graph of order $n\ge 2$. Throughout this proof, we will omit $G$ from $V(G),\, \partial_2(G),\, T(G),\, \mathcal{D}(G), \, D^{\mathcal{L}}(G)$ and $t_{\max}(G)$. We denote the all-ones vector by $\1$. For a vector $\y = (y_u)\in \mathbb{R}^{V}$, let $|\y|$ denote the vector whose entries are absolute values of the entries of $\y$. The \emph{support} of $\y$ is the set $\supp(\y):=\{u\in V: y_u\neq 0\}$. The \emph{positive} and \emph{negative support} of $\y$ are defined to be $\supp^+(\y):=\{u\in V: y_u> 0\}$ and $\supp^-(\y):=\{u\in V: y_u< 0\}$, respectively. 

We first prove an inequality for a quadratic form involving $T$ and $D$. 
\begin{claim}\label{claim:T_D_inequality} Let $\y\in \mathbb{R}^{V}$ be such that $\y\neq \0$ and $\1^\top \y = 0$. Then 
\begin{equation}\label{eq:T_D_inequality}
    \y^\top T\y - 3\y^\top \mathcal{D}\y \ge 4 \|\y\|_2^2.
\end{equation}
\end{claim}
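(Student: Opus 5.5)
The plan is to expand the quadratic form entrywise and then compare pairs of vertices with the same sign of $\y$ against pairs with opposite signs, using the triangle inequality. Put $P=\supp^+(\y)$ and $N=\supp^-(\y)$. Since $\1^\top\y=0$ and $\y\neq\0$, both sets are nonempty and
\[ s:=\sum_{u\in P}y_u=\sum_{w\in N}|y_w|>0. \]
We have $\y^\top T\y=\sum_u t(u)y_u^2=\sum_{u,v}\dist(u,v)y_u^2$ and
\[ \y^\top\mathcal{D}\y=S_P+S_N-2C, \]
where
\[ S_P=\sum_{u,u'\in P}\dist(u,u')y_uy_{u'},\qquad S_N=\sum_{w,w'\in N}\dist(w,w')|y_w||y_{w'}|,\qquad C=\sum_{u\in P,\,w\in N}\dist(u,w)\,y_u|y_w|. \]
So the cross term $C$ helps us, and the same-sign terms $S_P,S_N$ are what must be controlled.

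\textbf{Step 1 (triangle averaging).} For $u\neq u'$ in $P$, average the triangle inequality over $w\in N$ with weights $|y_w|/s$:
\[ \dist(u,u')\le \frac1s\sum_{w\in N}|y_w|\bigl(\dist(u,w)+\dist(u',w)\bigr). \]
Multiply by $y_uy_{u'}$ and sum over ordered pairs with $u\neq u'$; the diagonal terms are excluded because $\dist(u,u)=0$. This gives
\[ S_P\le 2C-\frac{2}{s}\sum_{u\in P}y_u^2\,a(u),\qquad a(u):=\sum_{w\in N}|y_w|\dist(u,w). \]
The symmetric bound holds for $S_N$, with $b(w):=\sum_{u\in P}y_u\dist(u,w)$. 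Hence
\[ \y^\top\mathcal{D}\y\le 2C-\frac2s\Bigl(\sum_{P}y_u^2a(u)+\sum_{N}y_w^2b(w)\Bigr). \]
The sanity check on $K_{m,m}$ with $\y=\pm\1$ on the two sides shows why the diagonal slack must be kept. There, equality holds in \eqref{eq:T_D_inequality}, and the lost diagonal terms are exactly what produce the $4\|\y\|_2^2$.

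\textbf{Step 2 (using $T$).} It then suffices to show
\[ \sum_u t(u)y_u^2+\frac6s\Bigl(\sum_P y_u^2a(u)+\sum_N y_w^2b(w)\Bigr)-6C\ \ge\ 4\|\y\|_2^2. \]
For $u\in P$, split $t(u)=\sum_{w\in N}\dist(u,w)+\sum_{u'\in P\setminus\{u\}}\dist(u,u')+\cdots$, and bound the same-sign part from below by $|P|-1$ using $\dist\ge1$; do the same for $N$. The term $6C$ should then be absorbed by Cauchy--Schwarz-type bounds pairing $y_u^2a(u)$ with $y_w^2b(w)$. Note that $C=\sum_{u\in P}y_ua(u)$, and $y_u\le s$, with $y_u\le s/|P|$ on average. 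The remaining $4\|\y\|_2^2$ should come from the constraint $\dist(u,w)\ge1$ on cross pairs together with $\dist\ge1$ on same-sign pairs. The $K_{m,m}$ computation, where $\y^\top T\y=2m(3m-2)$ and $\y^\top\mathcal D\y=2m^2-4m$, guides which inequalities must be tight: distance $1$ across, distance $2$ within, and $|y|$ constant.

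\textbf{Expected main obstacle.} The hard step is Step 2, namely absorbing $6C$, which may be large, using only $\y^\top T\y$ and the slack terms. I would first try to write the left-hand side as a sum over cross pairs $(u,w)$ of $\dist(u,w)$ times a quadratic in $(y_u,|y_w|)$ that is nonnegative, for example via $y_u^2/x+x|y_w|^2\ge 2y_u|y_w|$ with weights chosen as ratios of $s$. If this does not close, I would fall back on weakening the triangle step by averaging over a cleverer weight, mixing $w\in N$ with $w\in P$, so that the coefficient of $C$ becomes manageable.
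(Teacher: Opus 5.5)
Your Step 1 is correct, and it is exactly the paper's key computation: the paper's bound $s(2\p^\top\mathcal{D}\q-\p^\top\mathcal{D}\p)\ge 2\sum_{u,v}p_u^2q_v\dist(u,v)$ is your $2C-S_P\ge \frac{2}{s}\sum_P y_u^2a(u)$.

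\textbf{The gap is in Step 2.} You substitute the Step 1 bound into all of $-3\y^\top\mathcal{D}\y$. This reduces the claim to an inequality that is false. No choice of Cauchy--Schwarz weights or lower bounds on $t(u)$ can close it, because it already fails with the exact transmissions. Take $K_4$ with $\y=(1,1,-1,-1)$. Then $t(u)=3$, $s=2$, $a(u)=b(w)=2$ and $C=4$. Your left-hand side is $12+\frac{6}{2}(4+4)-24=12$, while $4\|\y\|_2^2=16$. More generally, $K_{2m}$ with $\pm1$ on the two halves gives $-2m^2+10m<8m$ for every $m\ge2$, even though the original inequality holds there.

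The cause is that the triangle bound $S_P\le 2C-\frac{2}{s}\sum_P y_u^2a(u)$ is slack when same-sign vertices are close; in $K_4$, $\dist(1,2)=1<2$. By using the bound with coefficient $3$ you pay this slack three times, and $\y^\top T\y$ cannot cover it. Your fallback of re-weighting the triangle averaging is too vague to count as a fix.

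\textbf{The missing idea.} Spend $\y^\top T\y$ on one copy of the same-sign terms exactly, before any triangle inequality. Apply nonnegativity of the form \eqref{eq:quadratic_form} to $|\y|$:
\[
\y^\top T\y=|\y|^\top T|\y|\ge |\y|^\top\mathcal{D}|\y| = S_P+S_N+2C .
\]
This gives
\[
\y^\top T\y-3\y^\top\mathcal{D}\y\ \ge\ 8C-2S_P-2S_N \;=\; 2(2C-S_P)+2(2C-S_N).
\]
Now Step 1 is needed only with coefficient $2$, and it yields a lower bound of $\frac{4}{s}\bigl(\sum_P y_u^2a(u)+\sum_N y_w^2b(w)\bigr)$. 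Finally, $a(u)\ge\sum_N|y_w|=s$ and $b(w)\ge s$ because $\dist\ge1$ across the two sides, so the bound is at least $4\|\y\|_2^2$. This is precisely the paper's decomposition \eqref{eq:expansion}.
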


\begin{proof}
Write $\y = \p - \q$, where $\p= (p_u)_{u\in V}$ and $\q= (q_u)_{u\in V}$ such that for any $u\in V$, 
 \[p_u = \max\{y_u,0\}, \qquad q_u=\max\{-y_u, 0\}.\] 
Clearly, $\p, \q$ are non-negative vectors with disjoint support. Indeed, 
\[ \supp^+(\y) = \supp(\p) \quad \text{ and }\quad \supp^-(\y) = \supp(\q).\]
Since $\y$ is a non-zero vector orthogonal to $\1$, we have
\begin{equation}\label{eq:s}
   s:=\1^\top \p = \1^\top \q > 0. 
\end{equation}
Now, observe that 
\begin{align*}
   \y^\top \mathcal{D}\y & = \p^\top \mathcal{D}\p + \q^\top \mathcal{D}\q - 2\p^\top \mathcal{D}\q;\\
   |\y|^\top \mathcal{D}|\y| & = \p^\top \mathcal{D}\p + \q^\top \mathcal{D}\q + 2\p^\top \mathcal{D}\q = \y^\top \mathcal{D} \y + 4\p^\top \mathcal{D}\q;\\
   \y^\top T\y & = |\y|^\top T|\y|.
\end{align*}
Therefore, 
\begin{equation}\label{eq:expansion}
     \y^\top T\y - 3\y^\top \mathcal{D}\y =  |\y|^\top (T-\mathcal{D})|\y| + 2(2\p^\top \mathcal{D}\q - \p^T\mathcal{D}\p)+ 2(2\p^\top \mathcal{D}\q - \q^T\mathcal{D}\q).
\end{equation}
Now, using \eqref{eq:quadratic_form}, we have 
\begin{equation}\label{eq:absolute_y}
    |\y|^\top (T-\mathcal{D})|\y| \ge 0.
\end{equation}

Using \eqref{eq:s}, we see that 
\begin{align*}\label{eq:triangle_inequality}
        & \qquad s(2\p^\top \mathcal{D}\q - \p^T\mathcal{D}\p) \\
        & = (\1^\top \p)(2\p^\top \mathcal{D}\q) - (\1^\top \q) \p^T\mathcal{D}\p) \nonumber\\
        & = \sum_{\substack{u,w\in \supp(\p)\\v\in \supp(\q)}} p_u p_wq_v \left(\dist(u,v) + \dist(v,w) - \dist(u,w)\right)\\
        & = 2\sum_{\substack{u\in \supp(\p)\\v\in \supp(\q)}} p_u^2q_v \dist(u,v) + \sum_{\substack{u\neq w\in \supp(\p)\\v\in \supp(\q)}} p_u p_wq_v \left(\dist(u,v) + \dist(v,w) - \dist(u,w)\right) \\
        & \ge 2\sum_{\substack{u\in \supp(\p)\\v\in \supp(\q)}} p_u^2q_v\dist(u,v) \quad (\text{by triangle inequality})\\
        & \ge 2\sum_{\substack{u\in \supp(\p)\\v\in \supp(\q)}} p_u^2q_v \quad (\text{since }u\neq v, \dist(u,v)\ge 1)\\
        & = 2 \sum_{u\in \supp(\p)} p_u^2 \left(\sum_{v\in \supp(\q)} q_v\right) \\
        & = 2s\|\p\|^2_2.
\end{align*}
Thus, 
\begin{equation}\label{eq:p_norm}
    2\p^\top \mathcal{D}\q - \p^T\mathcal{D}\p \ge 2\|\p\|^2_2.
\end{equation}
Similarly, one can argue that 
\begin{equation}\label{eq:q_norm}
    2\p^\top \mathcal{D}\q - \q^T\mathcal{D}\q \ge 2\|\q\|^2_2.
\end{equation}
Combining \eqref{eq:expansion}, \eqref{eq:absolute_y}, \eqref{eq:p_norm} and \eqref{eq:q_norm}, we get
\[ 
\y^\top T\y - 3\y^\top \mathcal{D}\y \ge 4\|\p\|^2_2 + 4\|\q\|_2^2 = 4\|\y\|^2_2. 
\]
This completes the proof of the inequality.
\end{proof}

\begin{claim}\label{claim:inequality_proof} We have
    \[\partial_2\ge \frac{2}{3} + \frac{4}{3\,t_{\max}}.\]
\end{claim}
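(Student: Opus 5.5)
We derive Claim \ref{claim:inequality_proof} from Claim \ref{claim:T_D_inequality} through the Rayleigh quotient. Since $\mathcal{D}^{\mathcal{L}} = T^{-1/2}(T-\mathcal{D})T^{-1/2}$ has kernel spanned by $T^{1/2}\1$, the Courant--Fischer theorem gives
\[
\partial_2 = \min_{\substack{\x\neq \0\\ \x\perp T^{1/2}\1}} \frac{\x^\top \mathcal{D}^{\mathcal{L}}\x}{\x^\top \x}
= \min_{\substack{\y\neq \0\\ \1^\top T\y = 0}} \frac{\y^\top (T-\mathcal{D})\y}{\y^\top T\y},
\]
where we substitute $\y = T^{-1/2}\x$. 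The constraint is $\1^\top T\y=0$, not $\1^\top\y=0$, so Claim \ref{claim:T_D_inequality} does not apply directly. We therefore take an eigenvector $\y$ attaining $\partial_2$ and use the eigen-equation in the form $(T-\mathcal{D})\y = \partial_2 T\y$.

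The key step is to pass to the vector $\mathbf{z}:=T\y$. It satisfies $\1^\top \mathbf{z}=0$ and $\mathbf{z}\neq\0$, which is exactly the hypothesis of Claim \ref{claim:T_D_inequality}. The eigen-equation becomes $\mathcal{D}T^{-1}\mathbf{z} = (1-\partial_2)\mathbf{z}$. This alone does not give a usable form in $\mathbf{z}$, so instead we work directly with $\y$ and split according to $\partial_2$.

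If $\partial_2\ge 1$, there is nothing to prove: $t_{\max}\ge 4$ once $n\ge 3$, and the case $n=2$ is checked directly, where $\partial_2=2=\frac23+\frac43$. If $\partial_2<1$, write $\mathcal{D}\y=(1-\partial_2)T\y$. Then $\mathcal{D}\y\perp\1$ is impossible to exploit directly, but $\mathcal{D}\y$ is parallel to $T\y$. Set $\mathbf{w}:=\mathcal{D}\y = (1-\partial_2)\mathbf{z}$, so $\1^\top\mathbf{w}=0$. Apply Claim \ref{claim:T_D_inequality} to $\mathbf{z}$ and use $\mathcal{D}T^{-1}\mathbf{z}=(1-\partial_2)\mathbf{z}$ together with $\mathbf{z}^\top T^{-1}\mathbf{z}=\y^\top T\y$.

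Rather than that route, the cleaner plan is to apply Claim \ref{claim:T_D_inequality} to the vector $\mathbf{u}:=T^{-1}\mathbf{z}$ projected appropriately. The simplest attempt is to apply it directly to $\mathbf{z}$:
\[
\mathbf{z}^\top T\mathbf{z} - 3\mathbf{z}^\top \mathcal{D}\mathbf{z}\ge 4\|\mathbf{z}\|^2 .
\]
Using $\mathcal{D}\y=(1-\partial_2)T\y$ does not simplify $\mathbf{z}^\top\mathcal{D}\mathbf{z}$, so this is the main obstacle. The fix is to use the symmetric version of the eigenproblem. The matrix $T^{-1}\mathcal{D}$ has the same spectrum as $\mathcal{D}T^{-1}$, and the eigenvalue $1-\partial_2$ is the second largest eigenvalue of $T^{-1/2}\mathcal{D}T^{-1/2}$. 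Its eigenvectors $\y$ (for $T^{-1}\mathcal{D}$) and $\mathbf{z}=T\y$ (for $\mathcal{D}T^{-1}$) satisfy
\[
\y^\top \mathcal{D}\y = (1-\partial_2)\,\y^\top T\y = (1-\partial_2)\,\y^\top\mathbf{z}.
\]
So I would aim for an inequality involving $\y$ paired with $\mathbf{z}$.

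Concretely, I expect the proof to redo the argument of Claim \ref{claim:T_D_inequality} in a bilinear form: decompose $\mathbf{z}=\p-\q$ with $\1^\top\p=\1^\top\q$, and use the triangle-inequality computation. Heuristically this should yield
\[
\y^\top T\y - 3\,\y^\top\mathcal{D}\y \ge 4\|\y\|^2 \ge \frac{4}{t_{\max}}\,\y^\top T\y .
\]
Substituting $\y^\top\mathcal{D}\y=(1-\partial_2)\y^\top T\y$ gives $1-3(1-\partial_2)\ge 4/t_{\max}$, which is exactly the claim. Transferring the centering condition from $\y$ to $\mathbf{z}=T\y$ is the step I expect to be hardest. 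If it fails, I would fall back on the monotonicity trick: replace $T$ by $t_{\max}I$ on the constraint side only where inequalities go the right way.
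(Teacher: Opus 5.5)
Your proposal has a genuine gap. The argument rests on $\y^\top T\y-3\y^\top\mathcal{D}\y\ge 4\|\y\|_2^2$ for the $\partial_2$-eigenvector $\y$. That vector is centered by $\1^\top T\y=0$ rather than $\1^\top\y=0$, so Claim~\ref{claim:T_D_inequality} does not apply to it. You assert the inequality only ``heuristically'' and leave it as the hardest step. The route you suggest, passing to $\mathbf{z}=T\y$ and redoing the triangle-inequality computation bilinearly, is not carried out and looks unlikely to work. That computation needs the mass balance $\1^\top\p=\1^\top\q$ for the same vectors that appear in the quadratic forms. Here the balance holds for the parts of $T\y$ while the forms involve $\y$, so the weights $t(\cdot)$ no longer pair terms into triangle inequalities. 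Separately, the claim ``$t_{\max}\ge 4$ once $n\ge 3$'' is false: $t_{\max}(K_3)=2$ and $t_{\max}(P_3)=t_{\max}(K_4)=3$, and for these graphs the target bound exceeds $1$. That case split is unnecessary anyway.

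The missing idea is that you do not need to transfer the centering to $T\y$; simply recenter $\y$ along $\1$. Let $c=\1^\top\y/n$ and $\mathbf{w}=\y-c\1$. Then $\1^\top\mathbf{w}=0$, and $\mathbf{w}\neq\0$, since otherwise $\y=c\1$ and $\1^\top T\y=0$ forces $c=0$. The numerator is unchanged because $(T-\mathcal{D})\1=\0$. The cross term drops out of the denominator because $\1^\top T\y=0$. Hence
\[
\frac{\mathbf{w}^\top(T-\mathcal{D})\mathbf{w}}{\mathbf{w}^\top T\mathbf{w}}=\frac{\partial_2\,\y^\top T\y}{\y^\top T\y+c^2\,\1^\top T\1}\le\partial_2 .
\]
Claim~\ref{claim:T_D_inequality} applied to $\mathbf{w}$, together with $\mathbf{w}^\top T\mathbf{w}\le t_{\max}\|\mathbf{w}\|_2^2$, bounds the left side below by $\frac{2}{3}+\frac{4}{3t_{\max}}$.

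The paper packages exactly this through the max--min form of Courant--Fischer, $\partial_2=\max_{\dim U=n-1}\min_{\0\neq\x\in U}\frac{\x^\top\mathcal{D}^{\mathcal{L}}\x}{\x^\top\x}$. Any $(n-1)$-dimensional subspace gives a lower bound, so one may take $U=T^{1/2}\1^{\perp}$ instead of $(T^{1/2}\1)^{\perp}$, which turns the constraint into $\1^\top\y=0$. The same recentering, using $\mathcal{D}\1=T\1$, also gives your intended inequality in the form $\y^\top T\y-3\y^\top\mathcal{D}\y=\mathbf{w}^\top T\mathbf{w}-3\mathbf{w}^\top\mathcal{D}\mathbf{w}+2c^2\,\1^\top T\1\ge\frac{4}{t_{\max}}\y^\top T\y$. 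That is all your final substitution step needs.
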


\begin{proof}
By the well-known Min-Max Theorem, we have 
\[ \partial_2 = \max_{\substack{U\\ \dim(U)=n-1}} \min_{ \0\ne \x\in U}\frac{\x^\top \mathcal{D}^{\mathcal{L}}\,\x}{\x^\top \x}.\]
Choose the $(n-1)$-dimensional subspace
\[ U  = T^{1/2}\1^{\perp} = \{T^{1/2}\y: y\in \mathbb{R}^{V}, \ \1^{\top}\y = 0\}\]
For $\x = T^{1/2}\y$, we have 
\[ \frac{\x^\top \mathcal{D}^{\mathcal{L}}\,\x}{\x^\top \x} = \frac{\y^\top (T-\mathcal{D})\y}{\y^\top T \y}.\]
Moreover, 
\begin{equation}\label{eq:norm_transmission}
    \y^\top T\y = \sum_{u\in V}t(u)y_u^2 \le t_{\max}\|\y\|_2^2,
\end{equation}
and for any $\0\neq  \y$ such that $\1^\top \y = 0$, we have 
\begin{align*}
   \frac{\y^\top (T-\mathcal{D})\y}{\y^\top T \y}
   & = \frac{2}{3} + \frac{\y^\top T\y - 3\y^\top \mathcal{D}\y}{3\y^\top T\y}\\
   & \ge \frac{2}{3} + \frac{4\|y\|_2^2}{3\y^\top T\y} \qquad (\text{by Claim \ref{claim:T_D_inequality}})\\
   & \ge \frac{2}{3} + \frac{4}{3\,t_{\max}}.
\end{align*}  
Thus,
\[    \partial_2  \ge  \min_{ \substack{\0\ne \y\in \mathbb{R}^{V}\\ \1^\top \y = 0}}\frac{\y^\top (T-\mathcal{D})\y}{\y^\top T \y}\ge \frac{2}{3} + \frac{4}{3\,t_{\max}}.\]
The claim follows.
\end{proof}

\begin{claim}\label{claim:equality_analysis} We have 
    \[\partial_2 = \frac{2}{3} + \frac{4}{3\,t_{\max}}\]
if and only if $G\cong K_{m,m}$ for some $m\ge 1$.     
\end{claim}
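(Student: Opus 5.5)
The plan has two parts: verify that $K_{m,m}$ attains the bound, then trace the inequalities of Claims \ref{claim:T_D_inequality} and \ref{claim:inequality_proof} to see what equality forces.

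\textbf{Sufficiency.} In $K_{m,m}$ every vertex has transmission $m + 2(m-1) = 3m-2$. Hence $T=(3m-2)I$ and $\mathcal{D}^{\mathcal{L}} = I - \mathcal{D}/(3m-2)$. Write $\mathcal{D} = A + 2(J - I - A)$, where $A$ is the adjacency matrix of $K_{m,m}$. The eigenvalues of $\mathcal{D}$ on $\1^\perp$ are then $2m-2-m\cdot(-1)\cdot$\ldots; concretely:
\begin{itemize}
\item the vector equal to $1$ on one side and $-1$ on the other has $A$-eigenvalue $-m$ and $J$-eigenvalue $0$, so its $\mathcal{D}$-eigenvalue is $-2+m = m-2$;
\item vectors summing to zero within each side have $A$-eigenvalue $0$, so their $\mathcal{D}$-eigenvalue is $-2$.
\end{itemize}
The largest $\mathcal{D}$-eigenvalue on $\1^\perp$ is therefore $m-2$ (for $m\ge 1$, since $m-2\ge -2$). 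This gives $\partial_2 = 1-\frac{m-2}{3m-2} = \frac{2m}{3m-2}$, which equals $\frac{2}{3}+\frac{4}{3(3m-2)}$ as required. For $m=1$ this reads $K_2$ with $\partial_2=2$, which checks out.

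\textbf{Necessity.} Suppose equality holds. Let $\y\ne\0$ with $\1^\top\y=0$ attain the minimum of the Rayleigh quotient, i.e.\ $\x=T^{1/2}\y$ is a $\partial_2$-eigenvector. Then every inequality in the chain is tight.

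\begin{itemize}
\item From \eqref{eq:norm_transmission}, $t(u)=t_{\max}$ for all $u\in\supp(\y)$.
\item From \eqref{eq:absolute_y}, $|\y|$ is constant on all of $V$ (equality in \eqref{eq:quadratic_form}). Since $\y\neq\0$, this gives $\supp(\y)=V$, $|y_u|=c>0$, and $|\supp^+(\y)|=|\supp^-(\y)|=m$, so $n=2m$. It also shows $G$ is transmission-regular.
\item In the chain for \eqref{eq:p_norm}, the step $\dist(u,v)\ge1$ is tight, so every vertex of $P=\supp^+(\y)$ is adjacent to every vertex of $Q=\supp^-(\y)$.
\item The triangle-inequality step is tight: for distinct $u,w\in P$ and any $v\in Q$ we get $\dist(u,w)=\dist(u,v)+\dist(v,w)=2$. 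Thus $P$ is independent, and by the symmetric argument so is $Q$.
\end{itemize}
Together these give $G\cong K_{m,m}$.

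\textbf{Main obstacle.} The delicate point is justifying that equality in $\partial_2$ transfers to a single vector $\y$ attaining equality throughout the chain. The max–min only gives $\partial_2\ge\min_U$, but the minimum of the Rayleigh quotient over $T^{1/2}\1^\perp$ is in fact exactly $\partial_2$. This is because $T^{1/2}\1$ is the $0$-eigenvector, so $T^{1/2}\1^\perp$ is its orthogonal complement $(T^{1/2}\1)^\perp$ transformed appropriately; more precisely, $\x\perp T^{1/2}\1$ iff $\y=T^{-1/2}\x$ satisfies $\1^\top T\y=0$, not $\1^\top\y=0$.

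So I would instead argue as follows. Since the subspace $T^{1/2}\1^\perp$ has dimension $n-1$, the quotient attains its minimum $\mu\le\partial_2$ on it by compactness. The chain shows $\mu\ge$ bound, so equality forces $\mu$ = bound and attained at some $\y$. All the tightness conclusions above then apply to that minimizer, and compactness is all that is needed.
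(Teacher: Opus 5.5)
Your proposal is correct and follows the paper's proof. For sufficiency you use the spectrum of $\mathcal{D}(K_{m,m})$, which you derive directly from $\mathcal{D}=2J-2I-A$ rather than citing it. For necessity you force equality in \eqref{eq:norm_transmission}, \eqref{eq:absolute_y}, \eqref{eq:p_norm} and \eqref{eq:q_norm} for a single vector $\y$.

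Your compactness remark makes explicit a step the paper leaves implicit: choose $\y$ as a minimizer of the quotient over $T^{1/2}\1^{\perp}$, and note that this minimizer need not be a $\partial_2$-eigenvector. For that reason, delete the ``i.e.\ $\x$ is a $\partial_2$-eigenvector'' clause in the first line of your necessity part.
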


\begin{proof} (Sufficiency). First, assume that $G \cong K_{m,m}$. It is clear that 
\[t(u) = t_{\max}(K_{m,m})  = 3m-2\]
for all $u\in V(K_{m,m})$. Thus, the transmission matrix $T(K_{m,m}) = (3m-2)I$. 

Moreover, the spectrum of the distance matrix $\mathcal{D}(K_{m,m})$ is given by (cf. \cite{Aouchiche_Hansen_2014})
\[ (3m-2)^{(1)},\, (m-2)^{(1)},\, (-2)^{(2m-2)}.\]
As 
\[ \mathcal{D}^{\mathcal{L}}(K_{m,m}) = I - \frac{1}{3m-2}\mathcal{D}(K_{m,m}),\]
we see that  
\[ \partial_2(K_{m,m}) = 1 - \frac{m-2}{3m-2} = \frac{2}{3} + \frac{4}{3\,t_{\max}(K_{m,m})}.\]

(Necessity). Now, suppose equality holds for some $G$. From the proof of Claim \ref{claim:inequality_proof}, equality must hold in \eqref{eq:T_D_inequality}. Then, from the proof of Claim \ref{claim:T_D_inequality}, equality must hold in \eqref{eq:absolute_y}, \eqref{eq:p_norm} and \eqref{eq:q_norm}. Moreover, using \eqref{eq:norm_transmission}, we see that the transmission $t(u)=t_{\max}$ for all $u\in V(G)$. 

Equality in \eqref{eq:absolute_y} forces that $|\y|$ is a constant vector by \eqref{eq:quadratic_form}. Therefore 
\[\supp^+(\y)\sqcup \supp^-(\y)=V,\] 
and there exists a constant $c > 0$ such that 
\[y_u = 
\begin{cases}
c & \text{if }u\in \supp^+(\y);\\
-c & \text{if }u\in \supp^-(\y).
\end{cases}\]
Since $\1^\top \y = 0$, we see that 
\begin{equation}
  |\supp^+(\y)| = |\supp^-(\y)|.  
\end{equation}

Equality in \eqref{eq:p_norm} forces that $\dist(u,v)=1$ for all $u\in \supp^+(\y)$ and $v\in \supp^-(\y)$. Moreover, if $u,w\in \supp^+(\y)$ and $u\neq w$, then 
\[\dist(u,w) = \dist(u,v) + \dist(v,w) = 2\]
for any $v\in \supp^-(\y)$. Thus, $\supp^+(\y)$ is an independent set with all vertices of $\supp^+(\y)$ adjacent to all vertices of $\supp^-(\y)$.

Equality in \eqref{eq:q_norm} similarly forces that $\supp^-(\y)$ is an independent set. We conclude that $G \cong K_{m,m}$ where $m = |\supp^+(\y)|=|\supp^-(\y)|$. 
\end{proof}

Combining Claims \ref{claim:inequality_proof} and \ref{claim:equality_analysis} proves Theorem \ref{thm:spectral_gap_main}.

\section*{AI statement}

We acknowledge the use of AI tools during the ideation phase. We declare that the text is not AI-generated.

\bibliographystyle{plain}
\bibliography{reference}

\vspace{0.4cm}

\affl{Hitesh Kumar}{hitesh.kumar.math@gmail.com, hitesh\_kumar@sfu.ca}{Department of Mathematics, Simon Fraser University, Burnaby, Canada}

\affl{Kamal Lochan Patra}{klpatra@niser.ac.in}{School of Mathematical Sciences, National Institute of Science Education and Research (NISER), Bhubaneswar, An OCC of Homi Bhabha National Institute, Jatni, Khurda, Odisha 752050, India} 

\end{document}